\theoremstyle{plain}
\theoremstyle{definition}
\newtheorem{theorem}{Theorem}[section]
\newtheorem{lemma}[theorem]{Lemma}
\newtheorem{proposition}[theorem]{Proposition}
\newtheorem{corollary}[theorem]{Corollary}
\newtheorem{definition}[theorem]{Definition}
\newtheorem{remark}[theorem]{Remark}
\newtheorem*{nnremark}{Remark}
\newtheorem*{claim}{Claim}
\newcommand{\mc}[1]{{\mathcal #1}}
\newcommand{\seq}[2]{\ensuremath{({#1}_{#2})_{#2\in\mathbb{N}}}}
\DeclareMathOperator{\essinf}{essinf}
\DeclareMathOperator{\Av}{Av} 
\newcommand{\mb}[1]{\mathbb{#1}}
\newcommand{\R}{\mathbb{R}}
\newcommand{\N}{\mathbb{N}}
\newcommand{\mtf}{\mc{M}_{\mc{T}}\phi}
\newcommand{\e}{\varepsilon}
\theoremstyle{plain}
\begin{document}
\title[sharp  $L^1$ estimates for the dyadic maximal operator]{An alternative approach\\to sharp  $L^1$ estimates\\for the dyadic maximal operator}
\begin{abstract}
We provide  alternative proofs of sharp  $L^1$ inequalities for the dyadic maximal function $\mc{M}_{\mc{T}}\phi$ when $\phi$ satisfies certain
$L^1$ and $L^{\infty}$ conditions (see \cite{M2}).
 \end{abstract}
   \footnotetext[1]{2020 {\em Mathematics Subject   classification}: {42B25} }
 \footnotetext[2]{{\em  Key words and phrases}:  Dyadic maximal operator, Bellman type functions}
 \author{Eleftherios  N. Nikolidakis}
\address[E. Nikolidakis]{Department of Mathematics,
University of Ioannina}
\email{enikolid@uoi.gr}
\author{Andreas G. Tolias}
\address[A. Tolias]{Department of  Mathematics,
University of Ioannina} \email{atolias@uoi.gr}
\maketitle
\section{Introduction} \label{S1}
The dyadic maximal operator on $\R^n$ is a useful tool in analysis and is defined by the formula
\begin{equation} \label{eq:1p1}
\mc{M}_d\phi(x) = \sup\left\{ \frac{1}{|S|} \int_S |\phi(u)| du: x\in S,\; S\subset  \R^n\ \text{ is a dyadic cube} \right\},
\end{equation}
for every $\phi\in L^1_\text{loc}(\mb R^n)$, where $|\cdot|$ denotes the Lebesgue measure on $\mb R^n$, and the dyadic cubes are those formed by the grids $2^{-N}\mb Z^n$, for $N=0, 1, 2, \ldots$.\\
It is well known that the operator defined above satisfies the following weak type $(1,1)$ inequality
\begin{equation} \label{eq:1p2}
\left|\left\{ x\in\mb R^n: \mc M_d\phi(x) > \lambda \right\}\right| \leq \frac{1}{\lambda} \int_{\left\{\mc M_d\phi > \lambda\right\}} |\phi(u)|\,  du
\end{equation}
for every $\phi\in L^1(\mb R^n)$ and every $\lambda>0$,
from which it is easy to get the following  $L^p$-inequality
\begin{equation} \label{eq:1p3}
\|\mc M_d\phi\|_p \leq \frac{p}{p-1} \|\phi\|_p
\end{equation}
for every $p>1$, and every $\phi\in L^p(\mb R^n)$.
It is easy to see that the weak type inequality \eqref{eq:1p2} is the best possible. For refinements of this inequality see \cite{N2}.

It has also been proved that \eqref{eq:1p3} is best possible (see \cite{B1} and \cite{B2} for general martingales and \cite{Wa} for dyadic ones).
An approach for the study of the behaviour of this maximal operator in more depth is the introduction of the so called Bellman functions which play the role
 of generalized norms of $\mc M_d$. Such functions related to the $L^p$-inequality \eqref{eq:1p3} have been precisely evaluated in   \cite{M1},
  \cite{M2} and \cite{NM1}. For the study of the Bellman functions of $\mc M_d$, we use the notation $\Av_E(\psi)=\frac{1}{|E|} \int_E \psi$,
   whenever $E$ is a Lebesgue measurable subset of $\mb R^n$ of positive measure and $\psi$ is a real valued integrable function
    defined on $E$. For a fixed  dyadic cube  $Q$  the localized maximal operator $\mc M'_d\phi$ is defined as in \eqref{eq:1p1}
    but with the dyadic cubes $S$ being assumed to be contained in $Q$. Then for every $p>1$  let
\begin{equation} \label{eq:1p4}
B_p(f,F)=\sup\left\{ \frac{1}{|Q|} \int_Q (\mc M'_d\phi)^p: \; \phi\ge 0,\; \Av_Q(\phi)=f,\; \Av_Q(\phi^p)=F \right\}
\end{equation}
where   the variables $f, F$ satisfy $0<f^p\leq F$.
This is the well known Bellman function  of two integral variables of the dyadic maximal operator.
By a scaling argument it is easy to see that \eqref{eq:1p4} is independent of the choice of $Q$, so we may choose
$Q$ to be the unit cube $[0,1]^n$.
In \cite{M1}, the function \eqref{eq:1p4} has been precisely evaluated for the first time. The proof has been given in a much more general setting of tree-like structures on probability spaces.

More precisely for a non-atomic probability space $(X,\mc{A},\mu)$ and  $\mc T$  a family of measurable subsets of $X$ that has a tree-like structure similar to the one of the dyadic case (the exact definition is given in Section \ref{S2})
  the dyadic maximal operator associated to $\mc T$ is defined  by
\begin{equation} \label{eq:1p5}
\mc M_{\mc T}\phi(x) = \sup \left\{ \frac{1}{\mu(I)} \int_I |\phi| d\mu:\; x\in I\in \mc T \right\}
\end{equation}
for every $\phi\in L^1(\mu)$  and $x\in X$.

This operator is related to the theory of martingales and satisfies essentially the same inequalities as $\mc M_d$ does. Now we define the corresponding Bellman function of three  variables of $\mc M_{\mc T}$, by
\begin{multline} \label{eq:1p6}
B_p^{\mc T}(f,F,k) = \sup \left\{ \int_K (\mc M_{\mc T}\phi)^p  d\mu:\; \phi\geq 0,\; \int_X\phi d\mu=f, \right. \\
  \left. \int_X\phi^p d\mu = F,\ K\subset X\ \text{measurable with}\ \mu(K)=k\right\},
\end{multline}
the variables $f, F,  k$ satisfying $0<f^p\leq F $ and   $k\in (0,1]$.
The exact evaluation of \eqref{eq:1p6} is given in \cite{M1}.

It is well known that in general  $\mc M_{\mc T}\phi$ does not belong to $L^1(\mu)$ when $\phi\in L^1(\mu)$. In \cite{Wi} it is proved
that if $\phi$ satisfies the condition $\int\limits_X |\phi| \log^+|\phi| d\mu  <+\infty $ then $\mc M_{\mc T}\phi\in L^1(\mu)$.
 In \cite{S} it is shown that this condition is also necessary for the integrability  of $\mc M_{\mc T}\phi$.
In \cite{M3} the corresponding to \eqref{eq:1p6} function with respect to certain $L\log L$ conditions has been precisely evaluated.
As a  matter of fact in \cite{M3} more general conditions on   $\phi$ have been considered.
An application of this result is the evaluation of the following Bellman type function
\begin{equation} \label{eq:1p7}
B_1^{\mc T}(f,M) = \sup \left\{ \int_X \mc M_{\mc T}\phi    d\mu:\; \phi\geq 0,\; \int_X\phi d\mu=f, \;
 \|\phi\|_\infty=M\right\}
\end{equation}
when $0<f\le M$.

In the subsequent sections we provide proofs of Theorem \ref{T1} and Theorem \ref{T2} that are stated right below. The results that we present are
special cases of deep results concerning the study of more general Bellman type functions that are considered in \cite{M2}  by A. Melas. However the approach that
we give in the present paper is more simple and elementary and thus easily accessible to the reader.

\begin{theorem}\label{T1}
For all real variables $f,M_1,M_2$ with $M_1\ge f>M_2\ge 0$ the following holds:
\begin{multline}\label{teq8}
\sup\Big\{ \int\limits_X  \mc{M}_{\mc{T}}\phi d\mu:     \phi:X\to \R^+ \mbox{ is measurable, }\\[1mm]
    \int\limits_X \phi d\mu=f,\; \|\phi\|_{\infty}=M_1,\;
   \essinf_X(\phi)=M_2   \Big\}\\[1mm]
   =  f+(f-M_2)\log\Big(\frac{M_1-M_2}{f-M_2}\Big)
 \end{multline}
 \end{theorem}

For the proof of the above theorem we study the respective Hardy operator problem which is connected to the dyadic
maximal operator problem and we use a symmetrization principle which appears in \cite{NM1}.

\begin{theorem}\label{T2}
For all $f,M,k$ that satisfy $0<f\le M$ and $k\in(0,1]$  it holds that
\begin{multline}\label{ineq1}
 \sup\Big\{ \int\limits_K  \mc{M}_{\mc{T}}\phi d\mu:\;\phi:X\to \R^+ \mbox{ is measurable},\;
  \int\limits_X \phi d\mu=f,\;\|\phi\|_{\infty}=M,  \\[1mm]
  \; K \mbox{ measurable, }    \mu(K)=k      \Big\}                   \\[1mm]
=  \left\{ \begin{array} {l@{\quad} l}
        kM    & \mbox{ if }0<k\le \frac{f}{M}  \\[4mm]
   f+f\log(\frac{Mk}{f}) & \mbox{ if } \frac{f}{M}<k\le 1
   \end{array}\right.
  \end{multline}
\end{theorem}

   The values of the  supremums
 that appear in Theorem \ref{T1}  and  Theorem \ref{T2} are
   independent
  of the probability measure space $(X,\mc{A},\mu)$  and the tree $\mc{T}$ (see also \cite{M2}).
  
  At this points we should comment on the methods that we use in the proofs of Theorems  \ref{T1} and \ref{T2}  compared to
  the methods used in the proofs of more general results in \cite{M2} by A. Melas.
  
In \cite{M2} A. Melas studies a more general problem by considering integrals of $\phi$ and $\mc{M}_{\mc{T}}\phi$ related to two increasing
convex functions $G$ and $H$ that satisfy certain growth conditions. His approach is given in several steps.
In the first one he provides a combinatorial rearrangement inequality on subtrees of the initial tree $\mc{T}$, and several technical lemmas that
uses in the sequel.
 In the second step he applies a linearization for $\mc{M}_{\mc{T}}\phi$ which permits him to study this maximal function on a certain subtree
 of $\mc{T}$ related to $\phi$.
By using the rearrangement inequality proved in the first step he reduces the evaluation of the Bellman function of interest to the evaluation
of a respective  Bellman type function involving decreasing functions.
The proofs of his results involve techniques from ODE's and from the theory of calculus of variations. At the third step he finds extremals
for the Bellman functions that he studies, again using ODE's and several techniques on extremization of integral expressions.
Finally he provides   examples, considering specific functions $G$ and $H$.

In our approach we use independent results (appearing in \cite{NM1}  and \cite{N3})  that allow us to reduce the evaluation
of the Bellman type functions that we study, to the corresponding problem for the Hardy operator acting on decreasing functions on $(0,1]$.
We determine the upper bound that is described in \eqref{teq8} (see Theorem \ref{T1}) by using Riemann-Stieljes integrals in a direct way.
 The sharpness of this upper bound in \eqref{teq8} is proved by using the results in \cite{N3}. Then we prove Theorem \ref{T2} by considering integrals of $\mc{M}_{\mc{T}}\phi$ on certain subsets of $X$ (related to the distribution function of $\mc{M}_{\mc{T}}\phi$)
 which can be decomposed as pairwise almost  disjoint unions of elements of $\mc{T}$. Then applying Theorem \ref{T1} we reach the upper
 bound that is stated in   \eqref{ineq1}.  Finally we prove the sharpness of this upper bound by providing functions $\phi$
 which satisfy the conditions that are settled on our problem, for which the value of $\int\limits_K \mc{M}_{\mc{T}}\phi d \mu$ on certain
 suitable subsets $K$ of $X$ is arbitrarily close to the right side of \eqref{ineq1}.

\section{Preliminaries} \label{S2}
\begin{definition}
Let  $(X,\mc{A},\mu)$ be a non-atomic probability measure space.
We recall that a  collection of  measurable sets $\mc{T}$ is called a tree in $\mc{A}$ provided that the following
 conditions are satisfied:
 \begin{enumerate}
 \item[(i)]    $X\in\mc{T}$ and every $I\in \mc{T}$ has positive measure.
  \item[(ii)]  To every $I\in \mc{T}$ corresponds a countable (finite or infinite) family $C(I)\subset \mc{T}$, containing at least two elements,
 such that:
 \begin{enumerate}
  \item[(a)]  The elements of $C(I)$ are almost pairwise disjoint, i.e. for $J,J'\in C(I)$ with $J\neq J'$ we have that
   $\mu(J\cap J')=0$.
  \item[(b)]   $I=\cup  C(I)$.
 \end{enumerate}
   \item[(iii)]  If we define $\mc{T}_{(0)}=\{X\}$ and $\mc{T}_{(n+1)}=\cup \{C(I): \;  I\in \mc{T}_{(n)}\}$ for all $n$
    then $\mc{T}=\bigcup\limits_{n=0}^{\infty} \mc{T}_{(n)}$.
  \item[(iv)] For     $(\mc{T}_{(n)})_{n\in\N}$ as defined above,
     $\lim\limits_{n}  \big[  \sup\{\mu(I):  I\in  \mc{T}_{(n)}\}\big]=0$.
 \end{enumerate}
 \end{definition}
 The maximal operator $\mc{M}_{\mc{T}}$ associated to the tree $\mc{T}$ corresponds to every measurable function
 $\phi:X\to\R$
  the  function $\mc{M}_\mc{T} \phi$ defined by the formula
\[ \mc{M}_\mc{T} \phi(x)=\sup  \big\{\frac{1}{\mu(I)}\int\limits_I|\phi| d \mu:   \;\; x\in I\in\mc{T}     \big\}.\]

We also recall that for every measurable function $\phi:X\to \R$,
  defining
   $\phi^*:(0,1]\to [0,+\infty)$   by the formula
   \[    \phi^*(t)=\inf\{y>0:\;  \mu([|\phi|>y])<t\}  \]
   we have that $\phi^*$ is the unique decreasing and left continuous function on $(0,1]$ that is equimeasurable to $\phi$.
 The following is proved in \cite{M1}:

 \begin{lemma}\label{L2}
    For every $I\in\mc{T}$ and every $\alpha$ such that $0<\alpha<1$, there exists a subfamily $\mc{F}$ of $\mc{T}$
  consisting of almost pairwise disjoint subsets of $I$  such that
    \[ \mu\big(\bigcup\limits_{J\in \mc{F}}J\big)=\sum\limits_{J\in \mc{F}}\mu(J)= (1-\alpha)\mu(I).  \]
  \end{lemma}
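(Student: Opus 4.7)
The plan is to construct $\mc{F}$ by a greedy descent along a single branch of the tree below $I$: at each stage, absorb as many children of the current node as fit under the remaining target, then recurse into the next child. The crucial ingredient will be property (iv), which forces measures at deep levels to vanish and thereby guarantees exact convergence to the target $(1-\alpha)\mu(I)$.

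Explicitly, I would set $t_0 = (1-\alpha)\mu(I)$ and $J^{(0)} = I$. Recursively, given $J^{(n)} \in \mc{T}$ and $0 < t_n \le \mu(J^{(n)})$, I enumerate $C(J^{(n)}) = \{I_1^{(n)}, I_2^{(n)}, \ldots\}$, let $k_n \ge 0$ be the largest integer with $\sum_{i=1}^{k_n}\mu(I_i^{(n)}) \le t_n$, set $\mc{F}_n = \{I_1^{(n)},\ldots, I_{k_n}^{(n)}\}$, $t_{n+1} = t_n - \sum_{i=1}^{k_n}\mu(I_i^{(n)})$, and, if $t_{n+1} > 0$, put $J^{(n+1)} = I_{k_n+1}^{(n)}$; the procedure halts as soon as $t_{n+1} = 0$. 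Define $\mc{F} = \bigcup_n \mc{F}_n$. The recursion is well-defined because the maximality of $k_n$ forces $t_{n+1} < \mu(I_{k_n+1}^{(n)}) = \mu(J^{(n+1)})$.

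Almost pairwise disjointness of $\mc{F}$ follows from (ii)(a) within each $\mc{F}_n$ and, across levels, from the fact that every element added at any step $n' > n$ lies in the single unchosen child $J^{(n+1)}$, which is almost disjoint from every member of $\mc{F}_n$. The total measure then telescopes: $\sum_{J\in\mc{F}}\mu(J) = \sum_n (t_n - t_{n+1}) = t_0 - \lim_n t_n$. If the procedure terminates, $\lim t_n = 0$ by construction; otherwise $J^{(n)}$ lies $n$ levels below $I$, so property (iv) forces $\mu(J^{(n)}) \to 0$, whence $t_n < \mu(J^{(n)}) \to 0$. Almost disjointness then upgrades the sum to $\mu(\bigcup_{J\in\mc{F}} J) = \sum_{J\in\mc{F}}\mu(J) = (1-\alpha)\mu(I)$, as required.

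The main obstacle is precisely this exactness of the target: the greedy construction a priori delivers only the inequality $\sum_{J\in\mc{F}} \mu(J) \le t_0$, and turning it into an equality demands that the residuals $t_n$ tend to zero. This is exactly what property (iv) supplies. Without a uniform control on the depth-$n$ cells, the tree could stratify so coarsely that the greedy process plateaus at a strictly positive residual, and one would only obtain an approximation rather than the desired equality.
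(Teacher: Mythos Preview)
The paper does not supply a proof of this lemma; it merely cites \cite{M1}. Your argument is correct and is, in fact, essentially the standard proof: greedily accumulate children of the current node until the residual target drops below the measure of the next child, then descend into that child; property (iv) forces the residual to vanish. One small slip: your inductive hypothesis should read $0 < t_n < \mu(J^{(n)})$ rather than $\le$, since with equality and infinitely many children the ``largest $k_n$'' need not exist. This causes no real problem, because $t_0 = (1-\alpha)\mu(I) < \mu(I)$ strictly and you yourself prove $t_{n+1} < \mu(J^{(n+1)})$, so the strict inequality propagates; just state the invariant accordingly.
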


  In \cite{N3}  the following is proved:

 \begin{lemma}
   For any integrable $\phi:X\to \R^+$ it holds that
   \[  (\mc{M}_{\mc{T}}f)^*(t)\le \frac{1}{t}\int_0^t\phi^*(u)du\mbox{  for every }t\in(0,1].\]
  \end{lemma}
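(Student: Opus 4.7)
The plan is to reduce the pointwise rearrangement inequality to the weak type $(1,1)$ bound for $\mc{M}_{\mc{T}}$ combined with the Hardy--Littlewood rearrangement inequality. Fix $t\in(0,1]$ and set $\lambda^* = (\mc{M}_{\mc{T}}\phi)^*(t)$; the case $\lambda^*=0$ is trivial. For any $0<\lambda<\lambda^*$, the definition of the decreasing rearrangement forces $\mu(E_\lambda)\ge t$, where $E_\lambda = \{\mc{M}_{\mc{T}}\phi>\lambda\}$. I intend to prove
$$\lambda\,\mu(E_\lambda)\le \int_{E_\lambda}\phi\,d\mu \le \int_0^{\mu(E_\lambda)}\phi^*(u)\,du.$$
The second inequality is the standard Hardy--Littlewood fact that the integral of $\phi$ over a set $E$ is bounded by $\int_0^{\mu(E)}\phi^*$. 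Combining both with the well-known monotonicity of $s\mapsto s^{-1}\int_0^s\phi^*(u)\,du$ (a direct consequence of $\phi^*$ being decreasing) yields
$$\lambda \le \frac{1}{\mu(E_\lambda)}\int_0^{\mu(E_\lambda)}\phi^*(u)\,du \le \frac{1}{t}\int_0^t\phi^*(u)\,du,$$
and letting $\lambda\uparrow\lambda^*$ closes the argument.

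The substantive work is the first (weak-type $(1,1)$) inequality, which requires a Calder\'on--Zygmund-style selection inside the tree $\mc{T}$. For each $x\in E_\lambda$ there is some $I\in\mc{T}$ with $x\in I$ and $\Av_I(\phi)>\lambda$; axiom (iv) of the tree, together with $\phi\in L^1(X)$, guarantees that the measures of tree sets containing $x$ shrink to zero along any descending chain, so for a.e.\ such $x$ there exists a \emph{largest} (in measure) $I=I(x)\in\mc{T}$ with $x\in I$ and $\Av_I(\phi)>\lambda$. Enumerate the resulting distinct maximal tree sets as $(I_j)$. Maximality plus the almost-disjointness of siblings in $C(J)$ (axiom (ii)(a)) implies that the $I_j$ are almost pairwise disjoint: any two either coincide a.e.\ or one would be strictly contained in the other inside the tree hierarchy, contradicting maximality. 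Since the $I_j$ cover $E_\lambda$ up to a null set,
$$\lambda\,\mu(E_\lambda)=\lambda\sum_j\mu(I_j)\le \sum_j\int_{I_j}\phi\,d\mu=\int_{E_\lambda}\phi\,d\mu.$$

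The main obstacle is this Calder\'on--Zygmund extraction in the abstract tree setting --- specifically, producing a well-defined family of maximal tree elements and verifying their almost pairwise disjointness. Crucially, this uses axiom (iv) to rule out infinite strictly descending chains of tree sets with averages above $\lambda$ (an obstruction that would prevent the ``largest'' $I$ from existing). Once this decomposition is in hand, the remainder of the argument is routine rearrangement bookkeeping relying on the monotonicity of the Hardy average $\phi^{**}(s)=s^{-1}\int_0^s\phi^*(u)\,du$.
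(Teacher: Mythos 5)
The paper does not prove this lemma itself; it is quoted from \cite{N3} with no argument given, so there is no in-paper proof to compare against. Your argument is the standard one — reduce to the weak type $(1,1)$ estimate via a Calder\'on--Zygmund stopping-time selection of maximal tree elements, then pass to rearrangements via Hardy--Littlewood and the monotonicity of $\phi^{**}(s)=s^{-1}\int_0^s\phi^*$ — and it is correct in all the essential steps: the identity $\bigcup_j I_j=E_\lambda$ (up to null sets), the strict inequality $\lambda\mu(I_j)<\int_{I_j}\phi$ for each $j$, the Hardy--Littlewood bound, the decreasingness of $\phi^{**}$, and the final passage $\lambda\uparrow\lambda^*$ all hold.

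One point of logic deserves correction. You attribute the existence of a maximal $I(x)$ (among tree sets containing $x$ with average $>\lambda$) to axiom (iv), arguing that it rules out ``infinite strictly descending chains.'' That is not the relevant mechanism: a descending chain of smaller and smaller sets poses no obstacle to having a \emph{largest} one. What actually gives you the maximal element is that the tree is rooted at $X$ with levels indexed by $\N$ (axioms (i) and (iii)): the tree sets containing a fixed $x$ form a chain $X=I^{(0)}\supset I^{(1)}\supset\cdots$ with $I^{(n)}\in\mc{T}_{(n)}$, and the set of levels $n$ for which $\Av_{I^{(n)}}(\phi)>\lambda$ is a nonempty subset of $\N$, hence has a least element $n_0$; then $I(x)=I^{(n_0)}$. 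Axiom (iv) is not needed here at all (it is what one uses, e.g., to get Lebesgue differentiation along the tree, or to justify the approximation remark in Section~\ref{S4}). With that clarification, the almost-pairwise-disjointness of the distinct $I(x)$'s follows exactly as you say, from the nesting property of the tree together with maximality, and the proof is complete.
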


 We will also need the following symmetrization principle which appears in \cite{NM1}.

 \begin{theorem}\label{T3}
 Let $g:(0,1]\to\R^+$ be decreasing  and let  $G_1, G_2:[0,+\infty) \to [0,+\infty) $ be two increasing functions.
 Then for every $k\in (0,1]$ the following holds:
 \begin{multline*}
  \sup\big\{\int_K (G_1\circ \mc{M}_{\mc{T}}\phi)(G_2\circ \phi)d\mu:\;  \phi^*=g,\; \mu(K)=k\big\}\\[1mm]
  =\int_0^k G_1\big(\frac{1}{t}\int_0^t g(u)du\big)G_2\big(g(t)\big)dt.
  \end{multline*}
 \end{theorem}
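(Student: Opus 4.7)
The plan is to establish the two directions of the equality separately. The upper bound $\le$ follows from Lemma 2.3 combined with the classical Hardy--Littlewood rearrangement inequality; the lower bound $\ge$ requires an explicit construction of near-extremizers built iteratively from Lemma 2.2.

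For the upper bound, fix admissible $\phi$ with $\phi^* = g$ and measurable $K$ with $\mu(K) = k$. Put $u = G_1 \circ \mc{M}_{\mc{T}}\phi$ and $v = (G_2 \circ \phi)\chi_K$. Monotonicity of $G_1, G_2$ together with the standard rearrangement identity yields $u^*(t) \le G_1((\mc{M}_{\mc{T}}\phi)^*(t))$ and $v^*(t) \le G_2(g(t))$ on $(0,k]$ while $v^*(t) = 0$ for $t > k$. Applying Lemma 2.3 to bound $(\mc{M}_{\mc{T}}\phi)^*(t) \le \tfrac{1}{t}\int_0^t g(u)\,du$ and invoking Hardy--Littlewood $\int_X uv\, d\mu \le \int_0^1 u^*(t) v^*(t)\, dt$ then gives the upper bound.

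For the lower bound, build approximate extremizers iteratively. Fix $\beta \in (0,1)$ close to $1$, set $\mc{F}_0 = \{X\}$ and $B_0 = X$; inductively apply Lemma 2.2 inside every $I \in \mc{F}_{n-1}$ with parameter $\alpha = 1-\beta$ to produce an almost disjoint subfamily $\mc{G}_I \subset \mc{T}$ with $\mu(\bigcup \mc{G}_I) = \beta\mu(I)$. Put $\mc{F}_n = \bigcup_I \mc{G}_I$, $B_n = \bigcup \mc{F}_n$, $A_n = B_n \setminus B_{n+1}$, so that $\mu(B_n) = \beta^n$, and arrange the construction to be self-similar at every scale. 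Define $\phi_\beta$ to be constantly $d_n := \tfrac{1}{\mu(A_n)}\int_{\beta^{n+1}}^{\beta^n} g(u)\, du$ on $A_n$. Then $\phi_\beta^*$ equals a step discretization of $g$, and since the $d_n$'s increase with $n$, a direct computation using self-similarity shows that for $x \in A_n$ the supremum of tree averages is realized at the level-$n$ element containing $x$, yielding $\mc{M}_{\mc{T}}\phi_\beta(x) = \sum_{k \ge 0} d_{n+k}\beta^k(1-\beta)$, which is a Riemann approximation of $\tfrac{1}{\beta^n}\int_0^{\beta^n} g$. Choose $K_\beta$ to be $B_{n_0}$ with $\beta^{n_0}$ adjusted so $\mu(K_\beta) = k$ exactly; then $\int_{K_\beta}(G_1 \circ \mc{M}_{\mc{T}}\phi_\beta)(G_2 \circ \phi_\beta)\, d\mu$ is a Riemann sum for $\int_0^k G_1(\tfrac{1}{t}\int_0^t g) G_2(g(t))\, dt$, converging to it as $\beta \uparrow 1$.

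The main obstacle is enforcing self-similarity $\mu(I \cap B_{n+k}) = \beta^k \mu(I)$ for every $I$ at every scale $n$ and depth $k$, while simultaneously maintaining the exact constraints $\phi_\beta^* = g$ and $\mu(K_\beta) = k$. Lemma 2.2 only prescribes proportions inside a single element, so coordinating the applications across all branches of the construction requires careful bookkeeping; the standard remedy is to establish the equality first for step functions $g$ (where only finitely many scales enter) and then pass to general $g$ by monotone approximation from below.
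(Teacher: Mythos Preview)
The paper does not prove this statement. Theorem~\ref{T3} is quoted in Section~\ref{S2} as a known result from \cite{NM1} (``We will also need the following symmetrization principle which appears in \cite{NM1}''), and no argument for it is given anywhere in the present article. There is therefore no in-paper proof to compare your proposal against.

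On the merits of your sketch itself: the upper-bound half is clean and correct. The identities $(G_1\circ\mc{M}_{\mc{T}}\phi)^*=G_1\circ(\mc{M}_{\mc{T}}\phi)^*$ and $(G_2\circ\phi)^*=G_2\circ g$ hold because $G_1,G_2$ are increasing, and combining Lemma~2.3 with Hardy--Littlewood gives exactly the claimed bound.

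For the lower bound, your iterated use of Lemma~2.2 is the standard construction and the self-similarity $\mu(I\cap B_{n+j})=\beta^{j}\mu(I)$ for $I\in\mc{F}_n$ does follow automatically from the inductive step, so that is not really an obstacle. The average of $\phi_\beta$ over such an $I$ is indeed $\beta^{-n}\int_0^{\beta^n}g$, giving the needed lower bound on $\mc{M}_{\mc{T}}\phi_\beta$ at points of $A_n$. The genuine gap you flag is the right one: your $\phi_\beta$ has $\phi_\beta^*$ equal to a step approximation of $g$, not $g$ itself, and $\mu(B_{n_0})$ hits only the discrete values $\beta^{n_0}$. Your proposed fix (prove the identity first for step $g$ and $k$ in the range of the construction, then approximate) works, but note that one must also argue that the supremum over $\phi$ with $\phi^*=g$ depends continuously on $g$ in an appropriate sense, or alternatively refine the construction inside each shell $A_n$ (using non-atomicity) so that $\phi_\beta^*=g$ exactly while preserving the tree-average lower bound; the latter route is what is actually carried out in \cite{NM1}.
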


 Let us fix some notation. For   $M_1\ge f  > M_2\ge 0$, we set
 \begin{eqnarray*}
  \mc{C}_{X,\mc{T}}(M_1,f,M_2) & =  &
     \{ \phi:X\to \R^+ \text{ measurable, } \|\phi\|_{\infty} = M_1,  \\
     & &    \;\int\limits_X \phi d\mu=f,\;  \essinf_X(\phi)=M_2  \}
   \end{eqnarray*}
   and
   \begin{equation*}
   \mc{C}_{X,\mc{T}}(M_1,f)  =
     \{ \phi:X\to \R^+ \text{ measurable, } \|\phi\|_{\infty} = M_1,
    \;\int\limits_X \phi d\mu=f  \}.
     \end{equation*}

 In Theorem \ref{T1}  we will compute the quantity
\[ \sup\big\{    \int\limits_{X} \mc{M}_{\mc{T}}\phi d \mu:\;    \phi\in   \mc{C}_{X,\mc{T}}(M_1,f,M_2)  \big\}  \]
for all
  for all $M_1\ge f >M_2\ge 0$  while in Theorem \ref{T2} we will compute   the quantity
\[     \sup\big\{ \int\limits_{K} \mc{M}_{\mc{T}}\phi d \mu, \;\phi\in   \mc{C}_{X,\mc{T}}(M_1,f),  \; K\in \mc{A},\;\mu(K)=k   \big\}  \]
    for all $M_1\ge f >0$ and all $k\in(0,1]$.

 \section{ Proof of Theorem \ref{T1}} \label{S3}
For the proof of Theorem \ref{T1} we work as follows.
Fix  $M_1\ge f>M_2\ge 0$
and let $\mc{A}(M_1,f,M_2)$ be the class of functions
\begin{eqnarray*}
\mc{A}(M_1,f,M_2)&=& \Big\{g:[0,1)\to [0,+\infty):   \;\; g  \mbox{ is decreasing,}\\
 &    &    \mbox{left continuous, continuous at }  0,\\
      &   & g(0)=M_1,\; \lim\limits_{t\to 1}g(t)=M_2 \mbox{ and }
 \int_0^1g(t)dt=f \Big\}
\end{eqnarray*}

 For each $g\in\mc{A}(M_1,f,M_2)$ we set
\[I_g=\int_0^1\Big(\frac{1}{t}\int_0^tg(u)du\Big)dt.\]

 Our goal is to maximize $I_g$  over all $g\in\mc{A}(M_1,f,M_2)$.

We observe that since $\lim\limits_{t\to 0} \Big(\log(t)\int_0^t g(u)du\Big)=0$ for every \\ $g\in\mc{A}(M_1,f,M_2)$,
 when we integrate by parts we get that
\[
 I_g=\int_0^1\Big(\frac{1}{t}\int_0^t g(u)du\Big)dt=-\int_0^1\log(t)g(t)dt.\]
 We consider the function $h:[0,1]\to \R$ defined by the formula
 \[h(t)=    \left\{ \begin{array} {l@{\quad} l}
      t-t\log(t) & \mbox{ if }\;\;0<t\le 1 \\[4mm]
  0 & \mbox{ if }\;\;t=0.     \end{array}\right.
     \]
 Since  $h(0)=0=\lim\limits_{t\to 0}h(t)$, $h'(t)=-\log(t)>0$   and    $h''(t)=-\frac{1}{t}<0$  for every $t\in (0,1)$,
 $h$ is continuous, strictly  increasing and strictly concave, thus
  for every $g\in \mc{A}(M_1,f,M_2)$ the value $I_g=\int_0^1g(t)\big(-\log(t)\big)dt$ may be expressed as the Riemann-Stieltjes integral $I_g=\int_0^1g(t)dh(t).$

\begin{proposition}\label{pr1}
 The quantity  $\sup\{I_g:\; g\in\mc{A}(M_1,f,M_2)\}$ is equal to \\$f+(f-M_2)\log(\frac{M_1-M_2}{f-M_2})$.
Moreover the supremum  is    uniquely attained by the function $g_0$ defined as
 \[g_0(t)=    \left\{ \begin{array} {l@{\quad} l}
  M_1 & \mbox{ if }\;\;0\le t\le c \\[4mm]
  M_2 & \mbox{ if }\;\;c<t\le 1     \end{array}\right.
     \]
where $c=\frac{f-M_2}{M_1-M_2}$.
\end{proposition}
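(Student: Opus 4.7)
The plan is to prove the matching upper bound $I_g\le I_{g_0}$ by a direct sign-comparison against the extremal step function $g_0$, and then to check by hand that $I_{g_0}$ equals the claimed value. The underlying idea is the familiar ``bang-bang'' principle: the linear objective $I_g$ over the convex class of decreasing functions with fixed mean and sup/inf constraints is maximized at a two-level step profile, and the shape of $g_0$ is forced by the total-mass constraint.

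As a warm-up I would verify that $g_0\in\mc{A}(M_1,f,M_2)$: it is decreasing, left continuous, continuous at $0$, satisfies $g_0(0)=M_1$ and $\lim_{t\to 1}g_0(t)=M_2$, and the defining choice $c=(f-M_2)/(M_1-M_2)$ gives $\int_0^1 g_0 = M_1c+M_2(1-c)=f$. (The nontrivial range of parameters is $M_1>f>M_2$, so $c\in(0,1)$.) A short computation with the antiderivative $\int(-\log t)\,dt=t-t\log t$ then produces
\[
I_{g_0}\;=\;M_2+c(M_1-M_2)(1-\log c)\;=\;f+(f-M_2)\log\!\Big(\tfrac{M_1-M_2}{f-M_2}\Big),
\]
which matches the value claimed for the supremum.

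The core of the proof is the sharp inequality $I_g\le I_{g_0}$ for an arbitrary $g\in\mc{A}(M_1,f,M_2)$. The pivotal observation is that the sign of $g_0-g$ and the sign of the shifted weight $\log(c/t)=-\log t+\log c$ flip at the very same point $t=c$ and in opposite directions: because $g$ is decreasing with $g(0)=M_1$, one has $g\le M_1=g_0$ on $[0,c]$; because $g$ is decreasing with $\lim g=M_2$, one has $g\ge M_2=g_0$ on $(c,1)$; meanwhile $\log(c/t)\ge 0$ on $[0,c]$ and $\log(c/t)\le 0$ on $(c,1]$. Hence $(g_0-g)(-\log t+\log c)\ge 0$ pointwise, and integrating,
\[
0\;\le\;\int_0^1 (g_0-g)(-\log t+\log c)\,dt\;=\;(I_{g_0}-I_g)+\log c\cdot\!\int_0^1(g_0-g)\,dt\;=\;I_{g_0}-I_g,
\]
using $\int g_0=\int g=f$ in the last equality.

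For uniqueness, equality above forces the non-negative integrand $(g_0-g)\log(c/t)$ to vanish a.e.; since $\log(c/t)\ne 0$ for $t\ne c$, we must have $g=g_0$ a.e., and left continuity of both functions upgrades this to pointwise equality on $[0,1)$. I do not foresee a substantial obstacle: the only delicate point is reading off the pointwise bounds $M_2\le g(t)\le M_1$ on $[0,1)$ from monotonicity together with the boundary conditions $g(0)=M_1$ and $\lim_{t\to 1^-}g(t)=M_2$ built into the class $\mc{A}(M_1,f,M_2)$.
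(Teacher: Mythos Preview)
Your argument is correct, and it is genuinely different from the paper's. The paper rewrites $I_g=\int_0^1 g\,dh$ with $h(t)=t-t\log t$, performs an Abel summation on Riemann--Stieltjes sums to express $I_g$ as $M_2+(M_1-M_2)$ times a convex combination of values of $h$, and then invokes the concavity of $h$ (Jensen) for the upper bound and strict concavity for uniqueness. Your route is more elementary: you shift the weight $-\log t$ by the constant $\log c$, which leaves $I_{g_0}-I_g$ unchanged because $\int g=\int g_0=f$, and then observe the pointwise sign coincidence $(g_0-g)\,\log(c/t)\ge 0$, so that the upper bound and the equality case both follow from a single non-negative integrand. In effect, your $\log c$ is playing the role of a Lagrange multiplier for the mean constraint, and the argument is the one-line ``bathtub'' principle rather than a Jensen computation. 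The paper's concavity framework may adapt more readily to the stability statement that follows (Proposition~\ref{pr2}), but for the proposition at hand your proof is shorter and avoids the Riemann--Stieltjes machinery entirely. The only points you flagged---that $M_2\le g(t)\le M_1$ on $[0,1)$ from monotonicity plus the boundary data, and that a.e.\ equality of two left-continuous decreasing functions upgrades to pointwise equality---are routine and correctly identified.
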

\begin{proof}[\bf Proof.]
We start with the second part.  A direct calculation shows that \[\int_0^1g_0(t)dt=M_1 \frac{f-M_2}{M_1-M_2}+M_2(1-\frac{f-M_2}{M_1-M_2})=f,\] while $g_0(0)=M_1$, $g_0(1)=M_2$ and $g_0$ is left continuous; thus $g_0\in\mc{A}(M_1,f,M_2)$.
We also have that
\begin{eqnarray*}
I_{g_0}  & =&  -\int_0^1\log(t)g_0(t)dt=\ -\int_0^cM_1\log(t)dt- \int_c^1M_2\log(t)dt\\
           & = & M_1 \big(h(c)-h(0)\big)+M_2 \big( h(1)-h(c)\big)\\
            &= &  M_1\big(c-c\log(c))+M_2\big(1-(c-c\log(c)\big)  \\
           &= &  M_2+(M_1-M_2) \; c\; \big(1-\log(c)\big)   \\
                 &= & M_2+(M_1-M_2)\frac{f-M_2}{M_1-M_2}\big(1-\log (\frac{f-M_2}{M_1-M_2})\big)\\
              &=  &   f+(f-M_2) \log (\frac{M_1-M_2}{f-M_2}).
\end{eqnarray*}
Thus $\sup\{I_g:\;  g\in \mc{A}(M_1,f,M_2)\}\ge   f+(f-M_2) \log (\frac{M_1-M_2}{f-M_2})$.

We will show now that the number  $  f+(f-M_2) \log (\frac{M_1-M_2}{f-M_2})$  is an upper
bound   of the set   $\{I_g:\; g\in  \mc{A}(M_1,f,M_2)\}$.

Let $g\in \mc{A}(M_1,f,M_2)$.
 We will calculate the Riemman Stieltjes integral $I_g=\int_0^1g(t)dh(t)$
 as the limit of the  Riemman Stieltjes  sums over the  net of all partitions $\mc{P}=\{0=t_0<t_1<t_2<\cdots<t_n=1\}$  equipped with   the right  boundaries of the intervals $[t_{i-1},t_i]$ as intermediate points.

 \begin{eqnarray*}
 I_g& = & \int_0^1g(t)dh(t)  \\
     &=&  \lim\limits_{\mc{P}}  \sum\limits_{i=1}^n g(t_i)\big(h(t_i)-h(t_{i-1})\big)\\
       &=&  \lim\limits_{\mc{P}}  \big(\sum\limits_{i=1}^n  g(t_i)h(t_i)  -\sum\limits_{i=1}^n  g(t_i)h(t_{i-1})\big)\\
             &=&  \lim\limits_{\mc{P}} \;\; \Big[ -g(t_1)h(t_0)+\sum\limits_{i=1}^{n-1}h(t_i)\big(g(t_i)-g(t_{i+1})\big)    +g(t_n)h(t_n)\Big]\\
            &=&  \lim\limits_{\mc{P}}   \;\; \Big[ M_2+ \sum\limits_{i=0}^{n-1}h(t_i)\big(g(t_i)-g(t_{i+1})\big)   \Big]
 \end{eqnarray*}
  (we took  into account that $h(t_0)=h(0)=0$, $h(t_n)=h(1)=1$ and $g(t_n)=g(1)=M_2$).

  Since $g$ is decreasing, for every partition $\mc{P}=\{0=t_0<t_1<t_2<\cdots<t_n=1\}$ we have that $g(t_i)-g(t_{i+1})\ge 0$
   for $i=0,1,\ldots,n-1$ while $\sum\limits_{i=0}^{n-1}\big(g(t_i)-g(t_{i+1})\big)=g(0)-g(1)=M_1-M_2$
 and thus the numbers $\big(\frac{g(t_i)-g(t_{i+1})}{M_1-M_2}\big)_{i=0}^{n-1}$ serve as coefficients of a convex combination. Since
   \[  I_g= M_2+(M_1-M_2)    \lim\limits_{\mc{P}}      \sum\limits_{i=0}^{n-1}h(t_i)           \frac{g(t_i)-g(t_{i+1})}{M_1-M_2}  \]
  the fact that the function $h$ is concave and continuous yields
   \begin{equation}\label{L1}
   I_g\le M_2+(M_1-M_2)  h\Big(        \lim\limits_{\mc{P}}     \sum\limits_{i=0}^{n-1}       \frac{g(t_i)-g(t_{i+1})}{M_1-M_2} t_i             \Big)
 \end{equation}
But also
 \begin{eqnarray*}
   & \lim\limits_{\mc{P}}     \sum\limits_{i=0}^{n-1}  \big(g(t_i)-g(t_{i+1})\big)t_i       &  \\
  &   =    \lim\limits_{\mc{P}}   \Big(   g(t_0)t_0+  \sum\limits_{i=0}^{n-1} g(t_i)(t_i-t_{i-1})- g(t_n)t_{n-1}\Big)  & \\
   &   =   M_1\cdot 0 +\int_0^1 g(t)dt -  M_2\cdot 1 =   f-M_2&
  \end{eqnarray*}
  and thus
\begin{eqnarray*}
  I_g  &  \le & M_2 +(M_1-M_2)h(\frac{f-M_2}{M_1-M_2})\\
       & =   &  M_2+ (M_1-M_2) \frac{f-M_2}{M_1-M_2}  (1-\log \frac{f-M_2}{M_1-M_2})\\
       &=  &  M_2+(f-M_2)+ (f-M_2)\log\frac{M_1-M_2}{f-M_2}\\
     &   =  & f+(f-M_2)\log \frac{M_1-M_2}{f-M_2}.
\end{eqnarray*}

Up to this pont we have shown that
 \[  \sup\{I_g: g\in \mc{A}(M_1,f,M_2)\}=  f+(f-  M_2)\log \frac{M_1-M_2}{f-M_2}\]  and that the
  supremum is   attained by the function $g_0$.  It remains to prove that $g_0$ is the unique function in  $\mc{A}(M_1,f,M_2)$ with this property.
 It is enough to show that for each $g\in  \mc{A}(M_1,f,M_2)$ with $g\neq g_0$ we have that $I_g< f+(f-  M_2)\log \frac{M_1-M_2}{f-M_2}$.

  Consider  such a $g$. Since $g\neq g_0$ there exists $y_0$ a point of continuity of $g$ such that $M_1>g(y_0)>M_2$. We set $M=g(y_0)$.
   In the proof that is presented above   we may consider only partitions of $[0,1]$ containing the point $y_0$  i.e. partitions  of the form
\[    \mc{P}=\{0=t_0<t_1<t_2<\cdots<t_{k-1}<t_k=y_0<t_{k+1}<\cdots<t_n=1 \}.    \]

 Then
\begin{eqnarray*}
  I_g  &=  & M_2 +(M_1-M_2) \lim\limits_{\mc{P}}\Big(\sum\limits_{i=0}^{n}h(t_i) \frac{g(t_i)-g(t_{i+1})}{M_1-M_2}\Big)  \\
              & =  &     M_2 +(M_1-M_2) \lim\limits_{\mc{P}} \Big(\frac{M_1-M}{M_1-M_2}   \sum\limits_{i=0}^{ k-1}h(t_i)\frac{g(t_i)-g(t_{i+1})}{M_1-M}\\
    &  &\qquad  + \frac{M-M_2}{M_1-M_2}    \sum\limits_{i=k}^{ n-1}h(t_i)\frac{g(t_i)-g(t_{i+1})}{M-M_2}
                                                  \Big)
\end{eqnarray*}
 Since $\sum\limits_{i=0}^{ k-1}\frac{g(t_i)-g(t_{i+1})}{M_1-M}=1$ and   $\sum\limits_{i=k}^{ n-1}\frac{g(t_i)-g(t_{i+1})}{M-M_2}=1$, using the fact that $h$ is concave we get that
\begin{eqnarray*}    I_g  & \le  &  M_2+(M_1-M_2)  \lim\limits_{\mc{P}}\Big(    \frac{M_1-M}{M_1-M_2}   h\big(     \sum\limits_{i=0}^{k-1} \frac{g(t_i)-g(t_{i+1})}{M_1-M}  t_i     \big)\\
&  &   \qquad       +                    \frac{M-M_2}{M_1-M_2}               h\big(     \sum\limits_{i=k}^{n-1} \frac{g(t_i)-g(t_{i+1})}{M-M_2}  t_i     \big)                \Big)
\end{eqnarray*}
But

\begin{eqnarray*}
\lim\limits_{\mc{P} }  \sum\limits_{i=0}^{k-1}\big(   g(t_i)-g(t_{i+1})\big) t_i     &=& \lim\limits_{\mc{P} }  \;\;\Big[g(t_0)t_0+\sum\limits_{i=1}^{k-1}g(t_i)(t_i-t_{i-1})-g(t_k)t_{k-1} \Big]  \\
    \\
   &= &  M_1   \cdot 0+\int_0^{y_0}g(t)dt-My_0=\int_0^{y_0}g(t)dt-My_0
\end{eqnarray*}
 and
\begin{eqnarray*}
   \lim\limits_{\mc{P} }  \sum\limits_{i=k}^{n-1}\big(   g(t_i)-g(t_{i+1})\big) t_i   &= &
   \lim\limits_{\mc{P} }  \;\;g(t_k)t_k+\sum\limits_{i=k+1}^{n-1}g(t_i)(t_i-t_{i-1})-g(t_n)t_{n-1} \\
     &=&
    M\cdot y_0  +\int_{y_0}^1g(t)dt-M_2.
\end{eqnarray*}
as $g$ is continuous at $y_0$ and $t_{k-1}\stackrel{\mc{P}}{\to} y_0$.

Thus we get that
\begin{eqnarray*}
I_g\le  M_2+(M_1-M_2)\Big[\frac{M_1-M}{M_1-M_2}h\Big(\frac{  \int_0^{y_0}g(t)dt-My_0   }{M_1-M}     \Big)\\
+    \frac{M-M_2}{M_1-M_2}   h\Big(  \frac{ My_0 +\int_{y_0}^1 g(t)dt -M_2   }{M-M_2}\Big)                                                            \Big]
\end{eqnarray*}

We  observe that the coefficients $a=\frac{M_1-M}{M_1-M_2}$ and $\beta=   \frac{M-M_2}{M_1-M_2}$ are positive, $a+\beta=1$ while,
\[        \frac{     \int_0^{y_0}g(t)dt-My_0}{M_1-M}  <y_0<   \frac{ My_0 +\int_{y_0}^1 g(t)dt -M_2   }{M-M_2}\]
where we took into account that the function $g$ is continuous at the points $y_0$ and $1$ and $M_1>g(y_0)=M>M_2$.

Therefore the fact that $h$ is strictly concave yields
\begin{eqnarray*}
   I_g   <   M_2+(M_1-M_2) h \Big( \frac{M_1-M}{M_1-M_2}\cdot\frac{  \int_0^{y_0}g(t)dt-My_0   }{M_1-M}   \\
+ \frac{M-M_2}{M_1-M_2}\cdot\frac{ My_0 +\int_{y_0}^1 g(t)dt -M_2   }{M-M_2}\Big)\\
=M_2+(M_1-M_2)h\Big( \frac{\int_0^1g(t)dt-M_2}{M_1-M_2}\Big)\\
= M_2+(M_1-M_2)h\big(\frac{f-M_2}{M_1-M_2}\big)\\
=f+(f-M_2)\log\big(\frac{M_1-M_2}{f-M_2}\big).
\end{eqnarray*}
The proof is complete.
\end{proof}

 Similar arguments to those used in the previous proof lead to the following.

 \begin{proposition} \label{pr2}
If   \seq{g}{n}  is a sequence in  $\mc{A}(M_1,f,M_2)$ such that
$\lim\limits_n   I_{g_n}=    f+(f-M_2)\log(\frac{M_1-M_2}{f-M_2})$ then
$g_n\stackrel{a.e.}{\longrightarrow}g_0$  (and therefore also $g_n\stackrel{L_1}{\longrightarrow}g_0$),
where $g_0$ is the function defined in the statement of Proposition \ref{pr1}.
 \end{proposition}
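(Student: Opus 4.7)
The plan is to extract a subsequential limit via Helly's selection theorem, identify it with $g_0$ using the uniqueness assertion of Proposition~\ref{pr1} after an ``envelope'' argument in the parameters $M_1,M_2$, and then pass from subsequential a.e.\ convergence to a.e.\ convergence of the full sequence by combining the monotonicity of the $g_n$ with the continuity of $g_0$ away from its single jump.

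First, take an arbitrary subsequence of $(g_n)$. Since the $g_n$ are decreasing and uniformly bounded in $[M_2,M_1]$, Helly's selection theorem yields a further subsequence $(g_{n_k})$ converging a.e.\ to some decreasing $g^{*}\colon[0,1)\to[M_2,M_1]$. Because $M_1|\log t|\in L^{1}(0,1)$ dominates $|g_{n_k}(t)\log t|$, dominated convergence gives $\int_{0}^{1}g^{*}=f$ and $I_{g^{*}}=\lim_{k}I_{g_{n_k}}=f+(f-M_2)\log\frac{M_1-M_2}{f-M_2}$. Writing $M_1^{*}:=g^{*}(0^{+})$ and $M_2^{*}:=g^{*}(1^{-})$, the left continuous modification of $g^{*}$ lies in $\mathcal{A}(M_1^{*},f,M_2^{*})$, so Proposition~\ref{pr1} gives $I_{g^{*}}\le F(M_1^{*},M_2^{*})$, where $F(a,b):=f+(f-b)\log\frac{a-b}{f-b}$. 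A direct computation shows $\partial F/\partial M_1=(f-M_2)/(M_1-M_2)>0$ and $\partial F/\partial M_2=\log\frac{f-M_2}{M_1-M_2}+1-\frac{f-M_2}{M_1-M_2}$, which is strictly negative whenever $M_1>f$. Thus $F(M_1^{*},M_2^{*})\le F(M_1,M_2)$, with equality forcing $(M_1^{*},M_2^{*})=(M_1,M_2)$; combined with $I_{g^{*}}=F(M_1,M_2)$ this shows that $g^{*}$ attains the supremum in $\mathcal{A}(M_1,f,M_2)$, and the uniqueness statement of Proposition~\ref{pr1} yields $g^{*}=g_0$ a.e. The degenerate case $M_1=f$ is handled directly: $g_n\le f$ and $\int g_n=f$ force $g_n=f=g_0$ a.e.

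The previous step shows that every subsequence of $(g_n)$ admits a further subsequence converging a.e.\ to $g_0$. To pass from this to pointwise convergence of the whole sequence at every $t\ne c:=(f-M_2)/(M_1-M_2)$, I argue by contradiction: if some subsequence $g_{n_k}(t)$ stays at distance $\ge\varepsilon$ from $g_0(t)$, the previous step extracts a further sub-subsequence $g_{n_{k_j}}\to g_0$ a.e.\ on $[0,1)$. Pick points $s_1<t<s_2$ in the full-measure convergence set, close to $t$; monotonicity gives $g_{n_{k_j}}(s_2)\le g_{n_{k_j}}(t)\le g_{n_{k_j}}(s_1)$, so letting $j\to\infty$ yields $g_0(s_2)\le\liminf_{j}g_{n_{k_j}}(t)\le\limsup_{j}g_{n_{k_j}}(t)\le g_0(s_1)$, and letting $s_1,s_2\to t$ along the convergence set, using continuity of $g_0$ at $t$ (valid since $t\ne c$), forces $g_{n_{k_j}}(t)\to g_0(t)$, contradicting the $\varepsilon$-gap. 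Since $\{c\}$ has measure zero, $g_n\to g_0$ almost everywhere, and $L^{1}$ convergence follows from the uniform bound by $M_1$.

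The main obstacle is the identification step: verifying that Proposition~\ref{pr1} applies cleanly to $g^{*}$ with the possibly shrunken parameters $(M_1^{*},M_2^{*})$ in place of $(M_1,M_2)$, and that the strict monotonicity of $F$ in these arguments is genuine enough to force $(M_1^{*},M_2^{*})=(M_1,M_2)$; once this is in hand, the combination of monotonicity of the $g_n$ with continuity of $g_0$ makes the passage to a.e.\ convergence of the full sequence essentially routine.
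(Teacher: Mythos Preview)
Your argument is correct. The compactness route via Helly, dominated convergence, and the monotonicity of
\[
F(a,b)=f+(f-b)\log\frac{a-b}{f-b}
\]
in each variable is clean, and the passage from subsequential a.e.\ convergence to a.e.\ convergence of the full sequence by sandwiching with monotone functions is standard. One cosmetic point: when $M_1=f$ the class $\mc{A}(f,f,M_2)$ is actually empty (any decreasing $g$ with $g(0)=f$ and $\int_0^1 g=f$ must equal $f$ a.e., contradicting $\lim_{t\to1}g(t)=M_2<f$), so that case is vacuous rather than requiring the separate treatment you give it; this does not affect the validity of the proof.

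The paper itself does not give a proof of Proposition~\ref{pr2}; it only says that ``similar arguments to those used in the previous proof'' suffice. The natural reading of this is a \emph{quantitative} version of the strict-concavity computation in the proof of Proposition~\ref{pr1}: if $g$ takes an intermediate value $M\in(M_2,M_1)$ at a continuity point $y_0$, the partition argument there yields an explicit upper bound for $I_g$ that is strictly below $F(M_1,M_2)$ by an amount depending only on $M$ and $y_0$; one then argues that if $g_n\not\to g_0$ a.e., such $(y_0,M)$ can be found uniformly along a subsequence, contradicting $I_{g_n}\to F(M_1,M_2)$. Your approach is genuinely different: instead of making the strict inequality quantitative, you pass to a subsequential limit and invoke the \emph{uniqueness} clause of Proposition~\ref{pr1} directly, after the envelope argument in $(M_1,M_2)$ to rule out shrinkage of the range. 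Your route is softer and arguably more transparent, and it isolates exactly why uniqueness forces stability; the paper's intended route would give, in principle, an explicit rate in the strict inequality, at the cost of more bookkeeping.
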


  We are now in position to prove Theorem \ref{T1}.

 \begin{proof}[\bf Proof of Theorem \ref{T1}]
 First we observe that for $\phi\in \mc{C}_{X,\mc{T}}(M_1,f,M_2)$ we have that $\phi^*\in \mc{A}(M_1,f,M_2)$.
 Also from \cite{N3} we have that
 $(\mc{M}_{\mc{T}}\phi)^*(t)\le \frac{1}{t}\int_0^t \phi^*(s)ds$ for every $t\in (0,1]$.
  Since the functions $\mc{M}_{\mc{T}}\phi$ and $(\mc{M}_{\mc{T}}\phi)^*$ are equimeasurable  we get that
  \begin{eqnarray*}
  \int\mc{M}_{\mc{T}}\phi d\mu & = & \int_0^1 (\mc{M}_{\mc{T}}\phi)^*(t)dt\\
    & \le &   \int_0^1 \Big(\frac{1}{t}\int_0^t \phi^*(s)ds\Big) dt \\
   &  = &  I_{\phi^*} \le  f+(f-M_2)\log(\frac{M_1-M_2}{f-M_2}).
      \end{eqnarray*}
where the last inequality follows from Proposition \ref{pr1}.

The sharpness of the above inequality is a consequence of Theorem     \ref{T3}
for $G_1(t)=t$, $G_2(t)=1$, $t\in [0,+\infty)$ and $k=1$,  where   $g=g_0$ is the function in
the statement of Proposition \ref{pr1} and the fact that since the probability measure space
$(X,\mc{A},\mu)$ is non-atomic we may easily find a measurable function $\phi:X\to\R$ such that $\phi^*=g_0$.
 The proof of Theorem \ref{T1} is complete.
 \end{proof}

 \section{Proof of Theorem \ref{T2}} \label{S4}

 In this section we will prove Theorem \ref{T2}. We start with the following lemma.
\begin{lemma}  \label{lem1}
Let $g_1,g_2:[0,+\infty)\to [0,+\infty)$ and $0<k\le 1$ such that:\\
(a) $g_1,g_2$ are decreasing.\\
(b) $g_1(t)\le  g_2(t)$ for every $t\in  [0,+\infty)$.\\
(c) $g_1$ is right continuous and $g_2$ is left continuous.\\
(d) $\lim\limits_{t\to+\infty}g_2(t)=0$ and $g_2(0)=1$.
Then there exists $u\in [0,+\infty)$ such that $g_1(u)\le k\le g_2(u)$.
\end{lemma}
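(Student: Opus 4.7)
The plan is to use an intermediate-value style argument based on the set where $g_2$ stays above $k$. Define
\[
u = \sup\{t\ge 0 : g_2(t)\ge k\}.
\]
First I would check that $u$ is a well-defined real number. The set is nonempty because $g_2(0)=1\ge k$, so $0$ belongs to it; and the set is bounded above because $\lim_{t\to+\infty}g_2(t)=0<k$, which (since $g_2$ is decreasing) forces $g_2(t)<k$ for all sufficiently large $t$.

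Next I would establish $g_2(u)\ge k$, exploiting left continuity of $g_2$. If $u$ itself lies in the set there is nothing to prove; otherwise, by definition of supremum there is a sequence $t_n\uparrow u$ with $t_n<u$ and $g_2(t_n)\ge k$, and left continuity gives $g_2(u)=\lim_n g_2(t_n)\ge k$. Either way $g_2(u)\ge k$.

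Then I would show $g_1(u)\le k$ using right continuity of $g_1$ together with the domination $g_1\le g_2$. For every $t>u$ we have $g_2(t)<k$ by definition of $u$, and therefore $g_1(t)\le g_2(t)<k$. Choosing any sequence $t_n\downarrow u$ with $t_n>u$ and invoking right continuity of $g_1$,
\[
g_1(u)=\lim_{n\to\infty}g_1(t_n)\le k.
\]
The main obstacle, such as it is, is simply aligning each continuity hypothesis with the correct side of the supremum (left continuity of $g_2$ forces the value at $u$ to stay above $k$, while right continuity of $g_1$ forces its value at $u$ to stay below $k$); the edge cases $u=0$ (when $k=1$ and $g_2(t)<1$ for all $t>0$) and the impossibility of $u=+\infty$ are handled automatically by the argument above.
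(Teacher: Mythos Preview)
Your proof is correct and follows essentially the same approach as the paper: both define $u=\sup\{t\ge 0:g_2(t)\ge k\}$, use left continuity of $g_2$ to obtain $g_2(u)\ge k$, and use right continuity of $g_1$ together with $g_1\le g_2$ to obtain $g_1(u)\le k$. The only cosmetic difference is that the paper argues the last step by contradiction while you use a direct limiting argument.
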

\begin{proof}[\bf Proof.]
We set  $u= \sup\; g_2^{-1}\Big([k,+\infty)\Big)$.
Since $g_2(0)=1$ the set $g_2^{-1}\Big([k,+\infty)\Big)$ is nonempty, while it is bounded from above since $g_2$ is decreasing
 and  $\lim\limits_{t\to+\infty}g_2(t)=0$.  Thus $u$ is a well defined real number.

 The left continuity of $g_2$ yields $g_2(u)\ge k$, thus it suffices to show that $g_1(u)\le k$.
If $g_1(u)>k$ from the right continuity of $g_1$ we get that $g_1(t)>k$ for all $t\in [u,u+\e)$ for some $\e>0$.
Thus $k<g_1(t)\le g_2(t)$ for all $t\in [u,u+\e)$, which contradicts the definition of $u$.
\end{proof}

\begin{lemma}  \label{lem2}
Let $(X,\mc{A},\mu)$ be a measure space, let $g:X\to [0,+\infty)$ be a measurable function and  $u\ge 0$.
Let also $D$ be a measurable set such that $[g>u]\subset D\subset [g\ge u]$.
Then for every measurable set $K$ such that $\mu(D)=\mu(K)$ we have that $\int\limits_K gd\mu\le \int\limits_D gd\mu$.
\end{lemma}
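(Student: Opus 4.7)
The plan is to prove Lemma \ref{lem2} by a direct decomposition of both $K$ and $D$ into their symmetric differences. Since the hypothesis $\mu(D)=\mu(K)$ is meant to be interpreted in a finite-measure setting (and is applied to the probability space of the theorem), we may subtract the measure of the common part $K\cap D$ from both $\mu(K)$ and $\mu(D)$ to obtain $\mu(K\setminus D)=\mu(D\setminus K)$.

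The key observation is the pointwise comparison of $g$ with the level $u$ on the two leftover pieces. On $D\setminus K$ we have $D\subseteq [g\ge u]$, hence $g\ge u$ there. On $K\setminus D$, the inclusion $[g>u]\subseteq D$ gives $X\setminus D\subseteq [g\le u]$, so $g\le u$ there. Writing
\begin{equation*}
\int_D g\,d\mu-\int_K g\,d\mu=\int_{D\setminus K}g\,d\mu-\int_{K\setminus D}g\,d\mu,
\end{equation*}
and bounding the two integrals by $u\,\mu(D\setminus K)$ from below and $u\,\mu(K\setminus D)$ from above respectively, the equality of the two measures makes the right-hand side nonnegative. This yields the desired inequality.

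There is essentially no obstacle here; the only point requiring a bit of care is the passage from $\mu(D)=\mu(K)$ to $\mu(D\setminus K)=\mu(K\setminus D)$, which is valid as long as $\mu(K\cap D)$ is finite (as it will be in our probability-space applications). This lemma is the natural analogue of the fact that, among sets of prescribed measure, the super-level sets maximize the integral of $g$: any other set $K$ of the same measure can only trade mass living above the threshold $u$ for mass living below it, which can only decrease the integral.
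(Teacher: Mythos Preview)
Your proof is correct and follows the same level-set comparison idea as the paper, but with a more economical decomposition. The paper introduces $V_1=[g>u]$ and $V_2=[g\ge u]$ and splits the difference $\nu(K)-\nu(D)$ into three pieces $-\nu(D\setminus V_1)-\nu(V_1\setminus K)+\nu(K\setminus V_1)$, bounding each against $u$ times the corresponding $\mu$-measure; you instead go straight to the symmetric difference $\int_{D\setminus K}g-\int_{K\setminus D}g$ and use only two bounds. Both arguments rely on the same pointwise comparisons ($g\ge u$ on $D$, $g\le u$ off $D$) and the same finiteness caveat needed to cancel $\mu(K\cap D)$, which you correctly flag. Your route is slightly cleaner; the paper's three-term split gains nothing extra here.
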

\begin{proof}[\bf Proof.]
Let $\nu$ denote the indefinite integral of $g$ with respect to $\mu$, i.e. the measure defined by the formula
$\nu(A)=\int\limits_Ag d\mu $ for all $A\in \mc{A}$.

We set $V_1=[g>u]$ and $V_2=[g\ge u]$.
We have that
$\nu(K)=\nu(K\cap V_1)+\nu(K\setminus V_1)= \nu(V_1)-\nu(V_1\setminus K)+\nu(K\setminus V_1)$ and taking into account
 that $V_1\subset D\subset V_2$ we get that
 \begin{equation}\label{eqn1}
 \nu(K)-\nu(D)=-\nu(D\setminus V_1)-\nu(V_1\setminus K)+\nu(K\setminus V_1)
 \end{equation}

Since for every $x\in D\setminus V_1$ we have that $x\in V_2$ and hence $g(x)\ge u$ and since $V_1\subset D$ we get that
\begin{equation}\label{eqn2}
  -\nu(D\setminus V_1)\le -u\mu(D\setminus V_1)=-u\mu(D)+u\mu(V_1)
 \end{equation}

 For every $x\in V_1\setminus K$ we have that $g(x)>u$, thus
 \begin{equation}\label{eqn3}
  -\nu(  V_1 \setminus K)\le -u\mu( V_1 \setminus K)=-u\mu(V_1)+u\mu(K\cap V_1)
 \end{equation}

 Finally for every $x\in K\setminus V_1$ we have that $g(x)\le u$ thus
  \begin{equation}\label{eqn4}
  \nu( K\setminus V_1 )\le u\mu( K\setminus V_1 )=u\mu(K)-u\mu(K\cap V_1)
 \end{equation}

 Therefore from \eqref{eqn1}, \eqref{eqn2}, \eqref{eqn3}, \eqref{eqn4} we get that
 \[ \nu(K)-\nu(D)\le u\mu(K)-u\mu(D)=0\]
 i.e.  $\int\limits_K g d\mu\le \int\limits_Dg d\mu$.
 \end{proof}

\begin{proof}[\bf Proof of Theorem \ref{T2}.]

For the case $k\le \frac{f}{M}$ it is obvious that for every $\phi\in \mc{C}_{X,\mc{T}}(M,f)$ and $K\in\mc{A}$ with $\mu(K)=k$
it holds that $\int\limits_K\mc{M}_{\mc{T}}\phi d\mu\le M\mu(K)=kM$.

We  examine next the case where  $\frac{f}{M}< k\le  1$. Let $\phi\in\mc{C}_{X,\mc{T}}(M,f)$.
The functions $g_1,g_2:X\to[0,+\infty]$ defined as $g_1(t)=\mu([\mc{M}_{\mc{T}}\phi >t])$ and
 $g_2(t)=\mu([\mc{M}_{\mc{T}}\phi \ge t])$ are decreasing, $g_1$ is right continuous, $g_2$ is left continuous,
 $g_1(t)\le g_2(t)$ for all $t$, $\lim\limits_{t\to+\infty}g_2(t)=0$ and $g_2(0)=1$.
 By Lemma \ref{lem1} there exists $u\ge 0$ such that $g_1(u)\le k\le g_2(u)$, i.e.
 $\mu([\mc{M}_{\mc{T}}\phi >u])\le k\le \mu([\mc{M}_{\mc{T}}\phi \ge u])$.
 The fact that the probability measure space $(X,\mc{A},\mu)$ is  non-atomic yields the existence of
 a $D\in\mc{A}$ with  $[\mc{M}_{\mc{T}}\phi >u]\subset D\subset [\mc{M}_{\mc{T}}\phi \ge u]$ such that
 $\mu(D)=k$.  Lemma \ref{lem2} implies that for every $K\in\mc{A}$ with $\mu(D)=\mu(K)$ the inequality
 $\int\limits_K \mc{M}_{\mc{T}}\phi d\mu \le  \int\limits_D \mc{M}_{\mc{T}}\phi d\mu$ holds.

 We set $V_1=[\mc{M}_{\mc{T}}\phi>u]$ and $V_2=[\mc{M}_{\mc{T}}\phi \ge u]$.
 Since $V_1\subset D\subset V_2$ and thus $\mu(V_1)\le \mu(D)\le \mu(V_2)$, there exists unique $s\in [0,1]$
 such that $k=\mu(D)= s\mu(V_1)+(1-s) \mu(V_2)$ hence $\mu(D\setminus V_1)= (1-s)\mu(V_2\setminus V_1)$.

\begin{claim}
\[\int\limits_D \mtf d\mu =s\int\limits_{V_1} \mtf d\mu+(1-s)\int\limits_{V_2} \mtf d\mu\]
\end{claim}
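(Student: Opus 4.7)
The plan is to exploit the fact that $V_2 \setminus V_1 = [\mc{M}_{\mc{T}}\phi = u]$, so that $\mtf$ is constant, equal to $u$, on this set. This trivializes the integral over any subset of $V_2 \setminus V_1$, reducing it to a multiple of the measure of that subset.

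First I would split the integral over $D$ using $V_1 \subset D$:
\[
\int\limits_D \mtf \, d\mu = \int\limits_{V_1} \mtf \, d\mu + \int\limits_{D\setminus V_1} \mtf \, d\mu.
\]
Since $D \setminus V_1 \subset V_2 \setminus V_1 = [\mtf = u]$, the second integral equals $u\,\mu(D\setminus V_1)$, and by the defining property of $s$ this is $u(1-s)\mu(V_2\setminus V_1)$. The same reasoning gives $\int_{V_2\setminus V_1}\mtf\,d\mu = u\,\mu(V_2\setminus V_1)$, so
\[
\int\limits_{D\setminus V_1} \mtf \, d\mu = (1-s)\int\limits_{V_2\setminus V_1}\mtf\,d\mu = (1-s)\Bigl(\int\limits_{V_2}\mtf\,d\mu - \int\limits_{V_1}\mtf\,d\mu\Bigr).
\]

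Substituting this back and collecting the coefficients of $\int_{V_1}\mtf\,d\mu$ and $\int_{V_2}\mtf\,d\mu$ yields exactly the claimed identity. There is no real obstacle here; the only point to verify carefully is that $V_2\setminus V_1$ indeed equals $[\mtf = u]$, which is immediate from the definitions of $V_1$ and $V_2$, and that $\mu(V_2\setminus V_1)>0$ is not needed (if it is zero, both sides trivially coincide since then $\mu(V_1)=\mu(D)=\mu(V_2)$).
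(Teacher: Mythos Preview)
Your argument is correct and is essentially identical to the paper's: both split $\int_D$ as $\int_{V_1}+\int_{D\setminus V_1}$, use that $\mtf\equiv u$ on $D\setminus V_1\subset V_2\setminus V_1$ together with $\mu(D\setminus V_1)=(1-s)\mu(V_2\setminus V_1)$, and then regroup. Your remark on the degenerate case $\mu(V_2\setminus V_1)=0$ is a small addition not made explicit in the paper.
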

\begin{proof}[\bf Proof.]
\begin{eqnarray*}
\int\limits_D \mtf d\mu  & =& \int\limits_{V_1} \mtf d\mu + \int\limits_{D\setminus V_1}\mtf d\mu\\
                        & =&    \int\limits_{V_1} \mtf d\mu + u\mu(D\setminus V_1) \\
                            & =&  \int\limits_{V_1} \mtf d\mu    +u(1-s)\mu(V_2\setminus V_1)\\
                         & =&       s \int\limits_{V_1} \mtf d\mu  +(1-s) \int\limits_{V_1} \mtf d\mu +(1-s)\int\limits_{V_2\setminus V_1}\mtf d\mu\\
                         &=& s\int\limits_{V_1} \mtf d\mu+(1-s)\int\limits_{V_2} \mtf d\mu.
 \end{eqnarray*}
\end{proof}

The set $V_1=[\mtf>u]$ can be written as $V_1=\bigcup\limits_{i\in A}I_i$ where $(I_i)_{i\in A}$ is a pairwise almost disjoint
 family of sets in $\mc{T}$, such that for each $i\in I$ the set $I_i$ is a maximal set of the family
 $ \{I\in \mc{T}:\; \frac{1}{\mu(I)}\int\limits_I\phi d \mu>u\}$.

We may also assume that the set $V_2=[\mtf \ge u]$ can be written
$V_2=\bigcup\limits_{j\in B}J_j$ where $(J_j)_{j\in B}$ is a pairwise almost disjoint
 family of sets in $\mc{T}$, such that for each $j\in J$ the set $J_j$ is a maximal set of the family
 $ \{J\in \mc{T}:\; \frac{1}{\mu(J)}\int\limits_J \phi d \mu   \ge  u\}$.

 This is possible because we may assume   that $\phi$ satisfies the following additional property:

 \begin{equation}\label{123}
  \forall x\in X \;\; \exists I_x\in\mc{T}:\; \mc{M}_{\mc{T}}\phi(x)=\frac{1}{\mu(I_x)}\int_{I_x} \phi d \mu.
  \end{equation}
 This is due to the following:

 \begin{nnremark}
 Let  $\phi:X\to \R^+$ be a measurable function such that $\int_X\phi d \mu =f$ and $\|\phi\|_{\infty}=M$.
 If we define the sequence \seq{\phi}{n}  of functions as follows
\[  \phi_n=\sum\limits_{I\in \mc{T}_{(n)}}\Av_I(\phi)\chi_I  \]
(where $\chi_I$ denotes the characteristic function of $I$),   then $\int_X\phi_n d\mu=f$, $\|\phi_n\|_{\infty}\le M$
and each $\phi_n$  satisfies \eqref{123}.
 Moreover, $(\mc{M}_{\mc{T}}\phi_n)_{n\in\N}$ is an increasing sequence of functions that converges pointwise to $\mc{M}_{\mc{T}}\phi$.
 This implies that $\int_B \mc{M}_{\mc{T}}\phi_n \to  \int_B \mc{M}_{\mc{T}}\phi$ for every measurable subset $B$ of $X$.
 \end{nnremark}

For each $i\in A$ we set $\mc{A}_i=\{I\in\mc{A}:\; I\subset I_i\}$,   $\mc{T}_i=\{I\in\mc{T}:\; I\subset I_i\}$
 and we define $\mu_i(I)=\frac{1}{\mu(I_i)}\mu(I)$ for every $I\in \mc{A}_i$.
 Then $(I_i,\mc{A}_i,\mu_i)$ is a non-atomic probability measure space and $\mc{T}_i$ is a tree in $\mc{A}_i$.
 Denoting by $\phi_i$ the restriction of $\phi$ on $I_i$, by $M_i$ the essential supremum of $\phi_i$ and
 setting $f_i(\phi)=\int\limits \phi_i d \mu_i=\frac{1}{\mu(I_i)}\int\limits_{I_i}\phi d \mu$ we observe that
  $0\le f_i(\phi)\le M_i\le M$ and $\phi_i\in  \mc{C}_{I_i,\mc{T}_i}(M_i,f_i(\phi),M_{2,i})$ for some $M_{2,i}$ with
  $0\le M_{2,i}\le f_i(\phi)$.
  In  the case where  $M_{2,i}< f_i(\phi)$,  Theorem \ref{T1}  yields
 \[\int\limits_{I_i}\mc{M}_{\mc{T}_i}\phi_i d\mu_i \le f_i(\phi)+\big(f_i(\phi)-M_{2,i}\big)   \log\big(\frac{M_i-M_{2,i}}{f_i(\phi)-M_{2,i}}\big)\]
 which implies that
 \[\int\limits_{I_i}\mc{M}_{\mc{T}_i}\phi_i d\mu_i \le f_i(\phi)+f_i(\phi)   \log\big(\frac{M_i}{f_i(\phi)}\big)\]
 while in the case $M_{2,i}=f_i(\phi)$  the last inequality is obvious.
 The fact that each $I_i$ is a maximal set  of the family
 $ \{I\in \mc{T}:\; \frac{1}{\mu(I)}\int\limits_I\phi d \mu>u\}$ imlpies that $\mc{M}_{\mc{T}_i}\phi_i(x)=\mc{M}_{\mc{T}}\phi(x)$ for every
  $x\in I_i$.
  Thus we get that
 \[\frac{1}{\mu(I_i)}\int\limits_{I_i}\mc{M}_{\mc{T}}\phi d\mu \le f_i(\phi)+f_i(\phi) \log(\frac{M_i}{f_i(\phi)})\;\; \mbox{ for every }i\in A. \]

By defining in a similar way $f_j(\phi)$ and $M_j$ for every $j\in B$, we get that
 \[\frac{1}{\mu(J_j)}\int\limits_{J_j}\mc{M}_{\mc{T}}\phi d\mu \le f_j(\phi)+f_j(\phi) \log(\frac{M_j}{f_j(\phi)})\;\; \mbox{ for every }j\in B. \]

From the inequalities above we get that
\begin{eqnarray*}
\int\limits_{V_1}\mc{M}_{\mc{T}}\phi d\mu & = & \sum\limits_{i\in A} \int\limits_{I_i}\mc{M}_{\mc{T}}\phi d\mu\\
        & \le &   \sum\limits_{i\in A} \mu(I_i) \Big( f_i(\phi)+f_i(\phi) \log(\frac{M_i}{f_i(\phi)})  \Big).
\end{eqnarray*}
and
\begin{eqnarray*}
\int\limits_{V_2}\mc{M}_{\mc{T}}\phi d\mu & = & \sum\limits_{j\in B} \int\limits_{J_j}\mc{M}_{\mc{T}}\phi d\mu\\
        & \le &   \sum\limits_{j\in B} \mu(J_j) \Big( f_j(\phi)+f_j(\phi) \log(\frac{M_j}{f_j(\phi)})  \Big).
\end{eqnarray*}

Let now an arbitrary $K\in\mc{A}$ with $\mu(K)=k$.
We have that
\begin{eqnarray*}
\int\limits_K \mc{M}_{\mc{T}}\phi d\mu & \le  &   \int\limits_D \mc{M}_{\mc{T}}\phi d\mu \\
   & = &   s\int\limits_{V_1}\mc{M}_{\mc{T}}\phi d\mu+(1-s)\int\limits_{V_2}\mc{M}_{\mc{T}}\phi d\mu\\
 & \le  &   s \sum\limits_{i\in A} \mu(I_i) \Big( f_i(\phi)+f_i(\phi) \log(\frac{M_i}{f_i(\phi)})  \Big)\\
     &  &   +(1-s)  \sum\limits_{j\in B} \mu(J_j) \Big( f_j(\phi)+f_j(\phi) \log(\frac{M_j}{f_j(\phi)})  \Big)\\
     & = &   s  \sum\limits_{i\in A} \mu(I_i)f_i(\phi)+(1-s) \sum\limits_{j\in B} \mu(J_j)f_j(\phi)\\
     &  &   +  s\sum\limits_{i\in A} \mu(I_i)f_i(\phi) \log(\frac{M_i}{f_i(\phi)})+
 (1-s) \sum\limits_{j\in B} \mu(J_j)  f_j(\phi) \log(\frac{M_j}{f_j(\phi)}).
\end{eqnarray*}
 Therefore
 \begin{eqnarray*}
\int\limits_K \mc{M}_{\mc{T}}\phi d\mu & \le  &   s \int\limits_{V_1}\phi d\mu+(1-s)\int\limits_{V_2}\phi d\mu\\
     &  &   +  s\sum\limits_{i\in A} \mu(I_i)f_i(\phi) \log(\frac{M_i}{f_i(\phi)})+
  (1-s) \sum\limits_{j\in B} \mu(J_j)  f_j(\phi) \log(\frac{M_j}{f_j(\phi)}).
\end{eqnarray*}
We observe that since $V_1\subset V_2$ we have that
\begin{equation}\label{V12}
  s \int\limits_{V_1}\phi d\mu+(1-s)\int\limits_{V_2}\phi d\mu\le  \int\limits_{V_2}\phi d\mu\le \int\limits_X \phi d\mu= f.
  \end{equation}
Also, since $M_i\le M$ for each $i\in A$ we get
\[\sum\limits_{i\in A} \mu(I_i)f_i(\phi) \log(\frac{M_i}{f_i(\phi)})\le
 \mu(V_1)\sum\limits_{i\in A}\frac{ \mu(I_i)}{\mu(V_1)}f_i(\phi)\log(\frac{M}{f_i(\phi)}).\]

Since $\sum\limits_{i\in A}\frac{\mu(I_i)}{\mu(V_1)}=1$ and the function $h_0$ defined as $h_0(t)=t\log(\frac{M}{t})$ is concave
we obtain
\begin{eqnarray*}
 \sum\limits_{i\in A}\frac{ \mu(I_i)}{\mu(V_1)}f_i(\phi)\log(\frac{M}{f_i(\phi)})\\
= \sum\limits_{i\in A}\frac{ \mu(I_i)}{\mu(V_1)} h_0(f_i(\phi))\\
\le   h_0\Big(\sum\limits_{i\in A}\frac{ \mu(I_i)}{\mu(V_1)}f_i(\phi)\Big)\\
= h_0\Big(\frac{1}{\mu(V_1)}\int\limits_{V_1}\phi d \mu\Big).
\end{eqnarray*}
Thus \[\sum\limits_{i\in A} \mu(I_i)f_i(\phi)\log(\frac{M_i}{f_i(\phi)})  \le \mu(V_1)h_0\Big(\frac{1}{\mu(V_1)}\int\limits_{V_1}\phi d \mu\Big).\]
Similarly it is shown that
\[    \sum\limits_{j\in B} \mu(J_j)f_j(\phi)\log(\frac{M_j}{f_j(\phi)})  \le \mu(V_2)h_0\Big(\frac{1}{\mu(V_2)}\int\limits_{V_2}\phi d \mu\Big).   \]

Therefore
\begin{eqnarray*}
 s\sum\limits_{i\in A} \mu(I_i)f_i(\phi)\log(\frac{M_i}{f_i(\phi)}) +(1-s) \sum\limits_{j\in B} \mu(J_j)f_j(\phi)\log(\frac{M_j}{f_j(\phi)}) \\
 \le s\mu(V_1)h_0\Big(\frac{1}{\mu(V_1)}\int\limits_{V_1}\phi d \mu\Big)+(1-s)\mu(V_2)h_0\Big(\frac{1}{\mu(V_2)}\int\limits_{V_2}\phi d \mu\Big) .
 \end{eqnarray*}

Taking into account that the number $s$ has been seleceted such that $s\mu(V_1)+(1-s)\mu(V_2)=\mu(D)=k$ and using again
the fact that the function $h_0$ is concave we get that the quantity above is
\begin{eqnarray*}
=k\Big(\frac{s\mu(V_1)}{k}h_0\Big(\frac{\int\limits_{V_1}\phi d \mu }{\mu(V_1)}\Big)+\frac{(1-s)\mu(V_2)}{k}h_0\Big(\frac{\int\limits_{V_2}\phi d \mu}{\mu(V_2)}\Big)\Big)\\
\le k h_0\Big( \frac{s}{k}\int\limits_{V_1}\phi d\mu + \frac{(1-s)}{k}\int\limits_{V_2}\phi d \mu \Big).
\end{eqnarray*}
We set $\Gamma=\frac{s}{k}\int\limits_{V_1}\phi d\mu + \frac{(1-s)}{k}\int\limits_{V_2}\phi d \mu $.
Until this point we have shown that
\[ \int\limits_K  \mc{M}_{\mc{T}}\phi d \mu \le  k \Gamma+k h_0(\Gamma)=k\Gamma+k\Gamma \log(\frac{Mk}{k\Gamma}).\]
We consider the function $h_1(t)=t+t\log(\frac{Mk}{t})$, $t\in (0,Mk]$.
The function $h_1$ is increasing, since $h_1'(t)=\log(\frac{Mk}{t})>0$ for every $t\in (0,Mk)$.
Since from \eqref{V12} we have that $k\Gamma\le f$ we get that
$h_1(k\Gamma)\le h_1(f)$  (remember that we are studying the case where $\frac{f}{M}<k\le 1$), thus
$k\Gamma+k\Gamma\log(\frac{Mk}{k\Gamma})\le f+f\log(\frac{Mk}{f})$.

Therefore
\[  \int\limits_K \mtf d\mu \le f+f\log(\frac{Mk}{f}).\]

Up to this point we have shown that the left hand side of \eqref{ineq1} in the statement of Theorem \ref{T2} is less than or equal to the right side.
It remains to show that equality holds. We will again discern the cases $0<k\le\frac{f}{M}$ and $\frac{f}{M}< k\le 1$.

Let's first consider the case $0<k\le\frac{f}{M}$.  Using Lemma \ref{L2} we select a family $(I_i)_{i\in A}$ of almost pairwise disjoint
 sets in $\mc{T}$ such that $\sum\limits_{i\in A} \mu(I_i)=k$.
 We consider the function $\phi: X\to\R$ defined as
 \[\phi(x)=\left\{ \begin{array} {l@{\quad} l}
  M & \mbox{ if }\;\;x\in  \bigcup\limits_{i\in A}I_i \\[4mm]
 \frac{f-Mk}{1-k} &   \mbox{ if }\;\;x\in X\setminus     \bigcup\limits_{i\in A}I_i   \end{array}\right.  \]

The function $\phi$ is measurable, while since $k\le \frac{f}{M}$ we have $\phi\ge 0$.
Also, since $f\le M$ we have $\frac{f-Mk}{1-k}\le M$ thus $\|\phi\|_{\infty}=M$.
Finally
\begin{eqnarray*}
 \int\limits_X\phi d \mu  & = & M\mu(\bigcup\limits_{i\in A}I_i)+\frac{f-Mk}{1-k}\mu(X\setminus     \bigcup\limits_{i\in A}I_i)\\
 & = &  Mk+\frac{f-Mk}{1-k}(1-k)= f
\end{eqnarray*}
thus $\phi\in \mc{C}_{X,\mc{T}}(M,f)$.

Since
\[     \int\limits_{\bigcup\limits_{i\in A}I_i}\mtf d \mu =Mk  \]
the proof in the case  where $0<k\le \frac{f}{M}$ is complete.

We treat now the case $\frac{f}{M}\le k\le 1$.
In this case it is enough to show that   for every $\delta >0$ there exists
 a $K\in \mc{A}$ with $\mu(K)=k$ and a $\phi\in \mc{C}_{X,\mc{T}}(M,f)$ such that
\[    \int\limits_K \mtf d\mu\ge (1-\delta) \big(f+f\log(\frac{Mk}{f})\big) . \]
 As in the first case, using again Lemma \ref{L2}, we select a family $(I_i)_{i\in A}$  of almost pairwise disjoint
 sets in $\mc{T}$ such that $\sum\limits_{i\in A} \mu(I_i)=k$.
 We set $K=\bigcup\limits_{i\in A}I_i$, $f'=\frac{f}{k}$ (notice that $f'\le M$) and for every $i\in A$
 we set $\mc{A}_i=\{I\in \mc{A} : \;I\subset I_i\}$,  $\mc{T}_i=\{I\in \mc{T}: \;I\subset I_i\}$ and
 we define $\mu_i(I)=\frac{1}{\mu(I_i)}\mu(I)$ for every $I\in \mc{A}_i$.  For each $i\in A$, Theorem \ref{T1}
 yields the
 existence of a $\phi_i\in \mc{C}_{I_i,\mc{T}_{i}}(M,f',0) $ such that
 $\int\mc{M}_{\mc{T}_{i}}\phi_i d\mu_i>(1-\delta)(f'+f'\log(\frac{M}{f'}))$.

 We consider  $\phi:X\to\R$ with $\phi|I_i=\phi_i$ for each $i\in I$ and $\phi|(X\setminus \bigcup\limits_{i\in A}I_i)=0$.
Then $\phi\ge 0$,  $\|\phi\|_{\infty}=M$ and
\[\int_{X} \phi d\mu =\sum\limits_{i\in A}\int\limits_{I_i}\phi d\mu= \sum\limits_{i\in A}\mu(I_i)f'=k\frac{f}{k}=f\]
hence $\phi\in \mc{C}_{X,\mc{T}}(M,f) $.

Finally we have that
\begin{eqnarray*}
\int\limits_K\mtf d\mu & = & \sum\limits_{i\in A}\int\limits_{I_i}\mtf d\mu\\
                         & \ge  & \sum\limits_{i\in A}\int\limits_{I_i}\mc{M}_{\mc{T}_{i}}\phi_i d\mu\\
                         & =  & \sum\limits_{i\in A}\mu(I_i) \int\limits_{I_i} \mc{M}_{ \mc{T}_{i}}\phi_i d\mu_i\\
                         & \ge &\sum\limits_{i\in A}\mu(I_i) (1-\delta)(f'+f'\log(\frac{M}{f'}))  \\
                         & = & k(1-\delta) (\frac{f}{k}+\frac{f}{k}\log(\frac{Mk}{f})\\
                         &=&  (1-\delta)(f+f\log(\frac{Mk}{f})).
\end{eqnarray*}
      Letting $\delta\to 0^+$ we obtain our result.
 \end{proof}


\begin{remark} \label{rf}
In the case that $f<M$ we may add the condition $\essinf_X(\phi)=0$ in the statement of Theorem \ref{T2} without affecting the value of the supremum.
\end{remark}
\begin{proof}[\bf Proof.] We distinguish the following three cases

 {\bf Case 1.}\;\;   $k=1$.   \\
 This case is obviously a special case of Theorem \ref{T1} for $M_2=0$.

  {\bf Case 2.}\;\;   $\frac{f}{M}\le k<1$.\\
  In this case the   extremal function $\phi$ that we constructed in the proof of Theorem \ref{T2}
  satisfies the desired condition.

    {\bf Case 3.}\;\;  $0<k<\frac{f}{M}$.\\
    In this case  the extremal  function $\phi$ that we constructed in the proof of Theorem \ref{T2} does not satisfy the condition $\essinf_X(\phi)=0$.
    In order to overcome this difficulty, for every sufficiently small $\e>0$ (namely $\e<1-\frac{f}{M}$) we construct a function $\phi_\e$ as follows.
    Keeping the notation that was used in the proof of Theorem \ref{T2}, for fixed $i_0\in A$, we choose a set $J_\e\in \mc{T}$
    such that $0<\mu(J_\e)<\e\mu(I_0)< 1-\frac{f}{M}$ and  $\mu(J_\e)<\frac{\e}{M}$.  We define  $\phi_\e$ by the formula

    \[\phi(x)=\left\{ \begin{array} {l@{\quad} l}
  M &  \mbox{if }x\in \bigcup\limits_{i\in A}I_i \setminus J_\e\\[3mm]
         0 &  \mbox{ if }\;\;x\in J_\e\\[3mm]
 \frac{f-M(k-\mu(J_\e))}{1-k} &   \mbox{ if }\;\;x\in X\setminus     \bigcup\limits_{i\in A}I_i.   \end{array}\right.  \]

 We notice that for this choice of $\e$ and $J_\e$ we have that $\frac{f-M(k-\mu(J_\e))}{1-k}<M$ and thus $\|\phi_\e\|_{\infty}=M$,
 while $\int \phi_\e d\mu =f$, $\essinf_X(\phi_\e)=0$ and
 \[
 \int\limits_{\bigcup\limits_{i\in A}I_i}\mc{M}_\mc{T}\phi_\e d \mu
 \ge  \int\limits_{\bigcup\limits_{i\in A}I_i\setminus J_{\e}}\mc{M}_\mc{T}\phi_\e d \mu
 =M(k-\mu(J_{\e}))\ge Mk-\e.
 \]
  and this completes the proof of the remark.
\end{proof}

An application of Theorem \ref{T2} is the following.

\begin{corollary}
For  $0\le M_2<f<M_1$ and $0\le k\le 1$
\begin{multline}
 \sup\Big\{ \int\limits_K  \mc{M}_{\mc{T}}\phi d\mu:\;\phi:X\to \R^+ \mbox{ is measurable},\;
  \int\limits_X \phi d\mu=f,\;\|\phi\|_{\infty}=M_1,  \\[1mm]
  \essinf_X(\phi)=M_2,  \; K \mbox{ measurable, }    \mu(K)=k      \Big\}                   \\[1mm]
=  \left\{ \begin{array} {l@{\quad} l}
        kM_1    & \mbox{ if }0<k\le \frac{f-M_2}{M_1-M_2}  \\[4mm]
   f-M_2(1-k) +(f-M_2)\log(\frac{(M_1-M_2)k}{f-M_2}) & \mbox{ if } \frac{f-M_2}{M_1-M_2}<k\le 1
   \end{array}\right.
  \end{multline}
\end{corollary}
\begin{proof}[\bf Proof.]
Fix $M_2,f,M_1$ and $k$ as in the statement, and let $A$ be the value of the supremum.
Then
\begin{multline*}
 A= kM_2+  \sup\Big\{ \int\limits_K  \mc{M}_{\mc{T}}(\phi-M_2) d\mu:\;\phi:X\to \R^+ \mbox{ is measurable},\\[1mm]
  \int\limits_X (\phi-M_2) d\mu=f-M_2,\;\;\|\phi-M_2\|_{\infty}=M_1-M_2,  \\[1mm]
  \essinf_X(\phi-M_2)=0,\;  \; K \mbox{ measurable, }    \mu(K)=k      \Big\}                   \\[1mm]
  =kM_2+\sup\Big\{ \int\limits_K  \mc{M}_{\mc{T}}\psi d\mu:\;\psi:X\to \R^+ \mbox{ is measurable},\;
  \int\limits_X \psi d\mu=f-M_2,\\[1mm]
  \|\psi\|_{\infty}=M_1-M_2, \;\;
  \essinf_X(\psi)=0,\;  \; K \mbox{ measurable, }    \mu(K)=k      \Big\}                   \\[1mm]
 \end{multline*}
 The latest supremum is evaluated using Remark \ref{rf}.
  In the case that  $0<k\le \frac{f-M_2}{M_1-M_2}$ we get that
\[
  A=kM_2+ k(M_1-M_2)  =  kM_1 \]
 while in the case that $0<k\le \frac{f-M_2}{M_1-M_2}$
 we get that
 \begin{eqnarray*}  A &  = &  kM_2+(f-M_2)+(f-M_2) \log(\frac{(M_1-M_2)k}{f-M_2})\\
    & = &  f-M_2(1-k)+(f-M_2) \log(\frac{(M_1-M_2)k}{f-M_2})
    \end{eqnarray*}
 and this completes the proof.
\end{proof}

\end{document}